\documentclass[preprint, 12pt, times]{elsarticle}

\usepackage{amsmath}
\usepackage{amssymb}
\usepackage{amsthm}
\usepackage{setspace}
\usepackage{mathtools}

\newtheorem{theorem}{Theorem}
\newtheorem{lemma}{Lemma} 

\theoremstyle{definition}
\newtheorem*{remark}{Remark} 
\theoremstyle{definition}
\newtheorem{definition}{Definition}[section]
\numberwithin{equation}{section}

\date{October 27, 2025}

\begin{document}
\begin{frontmatter}

 
\title{On the Least Colossally Abundant Exception to Robin's Inequality}

\author[CRI]{Bruce Zimov}
\address[CRI]{Calimesa Research Institute, Calimesa, CA, USA} 
\ead{bzimov@calri.org}

\begin{abstract}
Robin's Inequality posits $G(n)<e^{\gamma}$ for $n>5040$. Robin also showed that if the Riemann Hypothesis (RH) is false, then $G(n)>e^{\gamma}\left(1+\displaystyle\frac{c}{(\log n)^{b}}\right)$ for infinitely many values of $n$. By analyzing the prime or semiprime quotient $\displaystyle\frac{n}{m}$ for consecutive Colossally Abundant (CA) numbers $m$ followed by $n$ (where $m$ satisfies Robin's Inequality and $n$ violates it), we demonstrate that if the Riemann Hypothesis is false, then the least CA counterexample, $n$, must be constrained to the band $e^\gamma<G(n)<e^\gamma \left(1+\displaystyle\frac{c}{(\log n)^b}\right)$ where $0 < b < 1/2$, i.e. excluded from the infinite set beyond the higher threshold. 
\end{abstract}
\begin{keyword}
Colossally Abundant Numbers \sep Robin's Inequality \sep Riemann Hypothesis
\\
MSC codes: 11A25 \sep 11N37 \sep 11N56
\end{keyword}

\end{frontmatter}

\section{Introduction}
The Riemann hypothesis remains one of the most significant unsolved problems in mathematics. In 1984, Guy Robin \cite{RefF} established a remarkable equivalence, demonstrating that the hypothesis is true if and only if a specific inequality, now known as Robin's Inequality, holds for all integers $n>5040$. This inequality, given by $G(n)<e^\gamma$, where $\gamma$ is the Euler-Mascheroni constant, has since become a central focus for researchers. 
\begin{definition}
\label{def:G}
$$G(n) \coloneqq \displaystyle \frac{\sigma(n)}{n \log(\log n)},$$ where $\sigma(n)$ is the sum of divisors function.
\end{definition}
A key result in this area, proven by Akbary and Friggstad \cite{RefA}, shows that if a counterexample to the inequality exists, then the least such counterexample must be a superabundant number. These numbers, characterized by their efficient distribution of prime factors, are defined by the property that the ratio $\displaystyle \displaystyle\frac{\sigma(n)}{n}$ is greater than that for any preceding integer. A critical subset of superabundant numbers, known as colossally abundant numbers, provides a more structured and manageable set. These were first studied by Alaoglu and Erdős in 1944 \cite{RefB} and by Nicolas and Erdős in 1975 \cite{RefD}.  The existence of a least colossally abundant counterexample is a question that, if answered in the negative, would provide a powerful, unconditional proof of the Riemann hypothesis. In this paper, we will demonstrate that if RH is false, then for the least CA counterexample, $n$, $e^\gamma<G(n)<e^\gamma \left(1+\displaystyle \displaystyle\frac{c}{(\log n)^b}\right)$ where $0 < b < 1/2$.
\section{Consecutive Colossally Abundant Numbers and Robin's Inequality}
\begin{definition}[Colossally Abundant Numbers]\label{def:CA_properties}
A positive integer $n$ is a Colossally Abundant (CA) number if there exists an exponent $\epsilon > 0$ such that $\displaystyle\frac{\sigma(k)}{k^{1+\epsilon}}$ reaches its maximum at $n$, where $\sigma(k)$ is the sum of divisors function. These numbers are characterized by a specific structure involving their prime factors and exponents. For all positive integers $k$,
$$\displaystyle\frac{\sigma(k)}{k^{1+\epsilon}}\le \displaystyle\frac{\sigma(n)}{n^{1+\epsilon}}$$
\end{definition}
\begin{theorem}
\label{7.1}
If a counterexample to Robin's inequality exists for some $n > 5040$, then so does a counterexample which is a CA number.
\begin{proof}
This is [Broughan 2017, Lemma 7.1] \cite{RefC}
\end{proof}
\end{theorem}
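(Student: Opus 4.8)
The plan is to prove the contrapositive: assuming Robin's inequality $G(N) < e^{\gamma}$ holds at \emph{every} CA number $N > 5040$, I would show $G(n) < e^{\gamma}$ for every integer $n > 5040$, which is logically equivalent to the stated implication. The first reduction is to invoke Akbary and Friggstad \cite{RefA}: since the least counterexample (if any) is superabundant (SA), it suffices to verify Robin's inequality at superabundant $n$, and this matters because SA numbers lie much closer to the CA subsequence than arbitrary integers do, yielding a tighter comparison. The engine of the argument is the defining extremal property of a CA number $N$ with parameter $\epsilon > 0$ from Definition \ref{def:CA_properties}, namely $\sigma(k)/k^{1+\epsilon} \le \sigma(N)/N^{1+\epsilon}$ for all $k$, which rearranges to
$$\frac{\sigma(k)}{k} \le \frac{\sigma(N)}{N}\left(\frac{k}{N}\right)^{\epsilon} \qquad \text{for all } k \ge 1.$$

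Next I would sandwich a test number $n$ between the consecutive CA numbers $N^{-} \le n < N^{+}$ that bracket it and compare downward. Applying the displayed inequality with $N = N^{-}$, parameter $\epsilon^{-}$, and $k = n \ge N^{-}$ gives $\sigma(n)/n \le (\sigma(N^{-})/N^{-})(n/N^{-})^{\epsilon^{-}}$; dividing by $\log\log n$ and regrouping using Definition \ref{def:G} yields
$$G(n) \le G(N^{-}) \cdot \frac{\log\log N^{-}}{\log\log n}\left(\frac{n}{N^{-}}\right)^{\epsilon^{-}}.$$
Since $G(N^{-}) < e^{\gamma}$ by the standing hypothesis, it remains only to show that the two correction factors multiply to at most $1$: the ratio $\log\log N^{-}/\log\log n \le 1$ pushes in the favorable direction, while $(n/N^{-})^{\epsilon^{-}} \ge 1$ (as $n \ge N^{-}$ and $\epsilon^{-} > 0$) pushes against it. The finitely many CA numbers with $N^{-} \le 5040$ are handled by direct verification.

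The hard part will be controlling this product, because the two corrections are of the \emph{same} order of magnitude. Using the Alaoglu--Erd\H{o}s description of CA factorizations \cite{RefB} together with the classical asymptotics for $\vartheta(x)$, the gap between consecutive CA numbers satisfies $\log(N^{+}/N^{-}) = \log p \asymp \log\log n$ (the ratio being essentially a single prime $p \sim \log n$), while the champion parameter obeys $\epsilon^{-} \asymp 1/(\log n\,\log\log n)$. Consequently both $\epsilon^{-}\log(n/N^{-})$ and $\log(\log\log n / \log\log N^{-})$ are of order $1/\log n$, so only the sharp constants, not the orders, decide the inequality. I would therefore devote the bulk of the work to effective estimates, precise forms of Mertens' theorem and of $\vartheta(x)$ governing the CA exponents, to show that across the bracket the unfavorable factor $(n/N^{-})^{\epsilon^{-}}$ is dominated by the favorable factor $\log\log N^{-}/\log\log n$. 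Here the earlier reduction to superabundant $n$ is likely essential: restricting to $n$ at which $\sigma(n)/n$ is itself a record should make the extremal inequality nearly tight and so pin down the unfavorable factor, and establishing this quantitatively is the crux. Once that estimate is secured, $G(n) \le G(N^{-}) < e^{\gamma}$, the contrapositive is established, and with it Theorem \ref{7.1}.
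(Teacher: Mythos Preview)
Your setup is correct through the bound
\[
G(n)\;\le\;G(N^{-})\cdot\frac{\log\log N^{-}}{\log\log n}\left(\frac{n}{N^{-}}\right)^{\!\epsilon^{-}},
\]
but the subgoal you then set---that the two correction factors multiply to at most $1$---is not attainable, and the effective estimates you sketch cannot close it. The obstruction is structural. Take $\epsilon^{-}$ to be the critical parameter at which both $N^{-}$ and $N^{+}$ maximise $\sigma(k)/k^{1+\epsilon}$; then the CA inequality is an \emph{equality} at $k=N^{+}$, so your upper bound $f(n):=G(N^{-})\,(\log\log N^{-}/\log\log n)\,(n/N^{-})^{\epsilon^{-}}$ satisfies $f(N^{+})=G(N^{+})$ exactly. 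Hence the product of corrections at $n=N^{+}$ equals $G(N^{+})/G(N^{-})$, and your target ``product $\le 1$'' is equivalent to $G(N^{+})\le G(N^{-})$, i.e.\ to $G$ being nonincreasing along consecutive CA numbers. That monotonicity does not hold in general and is not implied by the contrapositive hypothesis $G<e^{\gamma}$ on CA numbers, so no sharpening of Mertens or prime-counting constants will rescue the one-sided comparison.

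The cited Robin/Broughan argument avoids this by using \emph{both} bracketing CA numbers together with a convexity step in place of your effective estimates. With $t=\log n$ one has $\log f(n)=\text{const}+\epsilon^{-}t-\log\log t$, whose second $t$-derivative $(\log t+1)/(t\log t)^{2}$ is positive; thus $f$ is log-convex on $[\log N^{-},\log N^{+}]$ and
\[
G(n)\;\le\;f(n)\;\le\;\max\bigl(f(N^{-}),f(N^{+})\bigr)\;=\;\max\bigl(G(N^{-}),G(N^{+})\bigr).
\]
Under your contrapositive hypothesis $G(N^{+})<e^{\gamma}$ as well, so $G(n)<e^{\gamma}$ and the proof is complete---with no effective prime-number input and without the Akbary--Friggstad reduction, since the displayed bound holds for every $n\in[N^{-},N^{+}]$, not only superabundant ones. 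Your framework becomes a correct proof once ``correction $\le 1$'' is replaced by this convexity observation; as written, the step you flag as the crux is a dead end.
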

\begin{theorem}
\label{6.15}
The quotient of two consecutive CA numbers is either a prime or the product of two distinct primes.
\begin{proof}
This is [Broughan 2017, Lemma 6.15] \cite{RefC}
\end{proof}
\end{theorem}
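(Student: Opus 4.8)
The plan is to exploit the multiplicative structure of the extremal problem defining CA numbers in Definition~\ref{def:CA_properties}. Since $\sigma$ is multiplicative, for $k = \prod_p p^{a_p}$ we have $\frac{\sigma(k)}{k^{1+\epsilon}} = \prod_p \frac{\sigma(p^{a_p})}{p^{a_p(1+\epsilon)}}$, so maximizing the left side over all $k$ amounts to maximizing each factor independently over the exponent $a_p \ge 0$. First I would, for each prime $p$ and each $\epsilon > 0$, identify the optimal exponent $\alpha_p(\epsilon)$ by comparing the factor at exponent $a$ with the factor at exponent $a-1$: their ratio is $\frac{\sigma(p^a)}{\sigma(p^{a-1})}\, p^{-(1+\epsilon)} = \frac{p^{a+1}-1}{p^{a}-1}\, p^{-(1+\epsilon)}$, which exceeds $1$ exactly when $\epsilon$ lies below the threshold
$$ x_p(a) \;=\; \frac{\log\!\left(\frac{p^{a+1}-1}{p^{a+1}-p}\right)}{\log p}. $$
Thus $\alpha_p(\epsilon)$ is an integer-valued step function, non-increasing as a function of $\epsilon$, jumping from $a-1$ up to $a$ precisely as $\epsilon$ decreases through $x_p(a)$.

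The second step is to show the quotient is squarefree. Because $\frac{p^{a+1}-1}{p^{a+1}-p} = 1 + \frac{p-1}{p^{a+1}-p}$ is strictly decreasing in $a$, the thresholds satisfy $x_p(1) > x_p(2) > \cdots$ strictly for each fixed $p$; hence as $\epsilon$ decreases, $\alpha_p(\epsilon)$ rises one unit at a time and never skips a value. Consequently, when $\epsilon$ passes a single critical level, any individual prime's exponent increases by at most one, so no prime can appear to a power higher than the first in the quotient of the two CA numbers flanking that level.

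Third, I would describe the passage from one CA number to the next. As $\epsilon$ decreases from a value where the maximizer is $m$, the maximizer $N(\epsilon) = \prod_p p^{\alpha_p(\epsilon)}$ stays constant until $\epsilon$ reaches the next largest element of the threshold set $\{x_p(a)\}$; call it $\epsilon^*$. The following CA number $n$ is then $m$ multiplied by $\prod p$ taken over exactly those primes $p$ for which some $x_p(a) = \epsilon^*$. By the squarefree step this product is a product of distinct primes, so it remains only to bound the number of such primes by two.

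The main obstacle is precisely this last count: ruling out that three or more distinct primes $p, q, r$ can share a common threshold $\epsilon^* = x_p(a) = x_q(b) = x_r(c)$. I would attack it by combining the monotonicity above with size estimates on the thresholds, noting that $x_p(a) = \frac{1}{\log p}\log\!\left(1 + \frac{p-1}{p^{a+1}-p}\right)$ decays like $\frac{1}{p^{a}\log p}$, so for a fixed $\epsilon^*$ only a bounded window of pairs $(p,a)$ is eligible; within that window I would then show that a triple coincidence forces an impossible multiplicative relation among the rationals $\frac{p^{a+1}-1}{p^{a+1}-p}$. I expect this Diophantine-flavored distinctness argument, rather than the structural setup, to be the crux. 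The two-prime case genuinely cannot be excluded and accounts for the ``product of two distinct primes'' alternative, so the target is exactly the sharp statement that at most two primes transition simultaneously.
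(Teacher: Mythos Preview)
The paper does not actually prove this theorem; it simply cites Broughan, who in turn follows the Alaoglu--Erd\H{o}s framework. Your structural setup---the prime-by-prime maximization via multiplicativity of $\sigma$, the threshold values $x_p(a)$, and the strict monotonicity of $x_p(a)$ in $a$ giving squarefreeness of the quotient---is exactly that classical route and is correct as far as it goes.

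The genuine gap is your final step. Ruling out a triple coincidence $x_p(a)=x_q(b)=x_r(c)=\epsilon^*$ is not a ``Diophantine-flavored distinctness argument'' among the rationals $\tfrac{p^{a+1}-1}{p^{a+1}-p}$; those rationals impose no useful multiplicative constraint on one another, and your size estimate $x_p(a)\asymp 1/(p^a\log p)$, while true, only gives finiteness of the eligible window, which does nothing to prevent three primes from landing on the same threshold exactly. The actual mechanism is transcendence-theoretic: such a coincidence makes $p^{\epsilon^*}$, $q^{\epsilon^*}$, $r^{\epsilon^*}$ simultaneously rational. One checks that $\epsilon^*$ is irrational (it lies in $(0,1)$, and $p^{\epsilon^*}\in\mathbb{Q}$ with $p$ prime and $\epsilon^*\in\mathbb{Q}$ would force $\epsilon^*\in\mathbb{Z}$), and then the \emph{Six Exponentials Theorem} of Lang and Ramachandra applies: with $\{1,\epsilon^*\}$ and $\{\log p,\log q,\log r\}$ each $\mathbb{Q}$-linearly independent, at least one of the six numbers $p,q,r,p^{\epsilon^*},q^{\epsilon^*},r^{\epsilon^*}$ must be transcendental, a contradiction. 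This step is genuinely non-elementary---Alaoglu and Erd\H{o}s in 1944 could only conjecture that the quotient is always a single prime (equivalent to the still-open Four Exponentials Conjecture), and the ``at most two primes'' bound became a theorem only after the Six Exponentials result was available. Your plan, as written, does not supply this ingredient and would stall at the crux.
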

\begin{theorem}
\label{super}
The smallest integer $n>5040$ which does not satisfy Robin's Inequality must be a superabundant number.
\begin{proof}
This is [Akbary and Friggstad 2009, Theorem 3] \cite{RefA}
\end{proof}
\end{theorem}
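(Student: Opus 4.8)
The plan is to argue by contradiction, transferring a weak violation of Robin's inequality \emph{downward} to a smaller integer and exploiting the monotonicity of $\log\log$. Write $f(k) \coloneqq \sigma(k)/k$, so that $G(k) = f(k)/\log\log k$, and recall that $k$ is superabundant exactly when $f(k) > f(j)$ for every $j < k$. Let $n > 5040$ be the least counterexample to Robin's inequality, so that $f(n) \ge e^{\gamma}\log\log n$, and suppose for contradiction that $n$ is \emph{not} superabundant.

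Because $n$ fails to be superabundant, the set $\{\,k < n : f(k) \ge f(n)\,\}$ is nonempty; let $m$ be its least element. Then $f(j) < f(n) \le f(m)$ for every $j < m$, so $m$ is itself superabundant, and $m < n$. Since $f(m) \ge f(n) > e^{\gamma}\log\log 5040 > \tfrac{3}{2} = f(2)$, we have $m \ge 3$, so $\log\log m > 0$ and the elementary inequality $\log\log m \le \log\log n$ applies. Combining these,
$$ G(m) = \frac{f(m)}{\log\log m} \ge \frac{f(n)}{\log\log m} \ge \frac{f(n)}{\log\log n} = G(n) \ge e^{\gamma}, $$
so $m$ also violates Robin's inequality in the weak sense $G(m) \ge e^{\gamma}$.

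It remains to force $m$ above the threshold $5040$, and here I would split into two cases. If $m > 5040$, then $m$ is a genuine counterexample strictly smaller than $n$, contradicting the minimality of $n$. If instead $m \le 5040$, then $f(m) \le \max_{k \le 5040} f(k) = f(5040) = 19344/5040$, since the maximum of $f$ on $[1,5040]$ is attained at the largest superabundant number in that range, namely $5040$ itself. The chain $e^{\gamma}\log\log n \le f(n) \le f(m) \le f(5040)$ then forces $\log\log n \le f(5040)/e^{\gamma}$, confining $n$ to the explicit finite window $5040 < n \le N_0$ with $N_0 = \exp\!\big(\exp(f(5040)/e^{\gamma})\big) \approx 5585$. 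A direct computation over this short interval (or appeal to the known computational verification of Robin's inequality far beyond $N_0$) shows no counterexample exists there, again a contradiction. In either case the assumption fails, so $n$ is superabundant.

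I expect the main obstacle to be precisely the boundary case $m \le 5040$. The monotonicity argument is clean and transfers the violation downward for free, but it only guarantees a \emph{smaller} witness, not necessarily one exceeding $5040$. Pinning $m$ above the threshold — equivalently, excluding the short interval just above $5040$ — is the one place where the argument needs the quantitative estimate $f(5040)/e^{\gamma} \approx 2.155$ together with a finite check, and it is exactly where the special status of $5040$ as the last classical exception to Robin's inequality genuinely enters.
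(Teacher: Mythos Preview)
Your argument is correct and is essentially the Akbary--Friggstad proof that the paper merely cites: push the violation downward via $f(m)\ge f(n)$ together with $0<\log\log m\le\log\log n$, then dispose of the residual possibility $m\le 5040$ by the bound $f(n)\le f(m)\le f(5040)$ and a finite check on $(5040,N_0]$. The paper supplies no independent proof beyond the citation, so there is nothing further to compare.
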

\begin{theorem} Robin’s inequality holds for all $5040 < n \leq 10^{\displaystyle(10^{13.099})}$.
\label{MorillPlatt}
\begin{proof}
This is [Morill Platt 2018, Theorem 5] \cite{RefE}
\end{proof}
\end{theorem}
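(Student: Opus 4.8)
The plan is to convert the assertion into a statement purely about the Chebyshev function $\theta(x) = \sum_{p \le x}\log p$ on a bounded range, and then discharge that statement with explicit prime estimates whose validity on the range is guaranteed by the numerical verification of RH up to a suitable height. First I would invoke the structural reductions already in hand: by Theorem \ref{super} and Theorem \ref{7.1}, any counterexample to $G(n) < e^\gamma$ in the interval $5040 < n \le N$, with $N = 10^{(10^{13.099})}$, forces a superabundant, indeed colossally abundant, counterexample in the same interval. Because such numbers are extremely sparse and carry the rigid factorization $n = \prod_{p \le x} p^{a_p}$ with non-increasing exponents, it suffices to bound $G(n)$ over this family rather than over all integers.

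Next I would make $G(n)$ explicit on a CA number. Writing $\log\frac{\sigma(n)}{n} = \sum_{p\le x}\log\frac{1 - p^{-(a_p+1)}}{1-p^{-1}}$, the leading term is $\sum_{p \le x}\log(1-1/p)^{-1}$, which Mertens' third theorem identifies with $\gamma + \log\log x$ up to a controlled remainder, while $\log\log n = \log\theta(x) + o(1)$ since $\log n = \theta(x) + \sum_{p,\, a_p \ge 2}(a_p-1)\log p$ and the largest prime factor obeys $\theta(x) \le \log N$, whence $x \lesssim \log N \approx 2.9\times 10^{13}$. Assembling these pieces reduces $G(n) < e^\gamma$ to an explicit inequality in $x$ governed by the error term $\theta(x) - x$ together with the remainder in the partial-summation estimate for $\sum_{p\le x} 1/p$. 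The target then becomes: show that across $2 \le x \le X$, with $X \approx 2.9\times 10^{13}$, these errors are small enough to keep $\log G(n)$ strictly below $\gamma$.

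Finally I would supply the explicit bounds. The numerical verification that all nontrivial zeros of $\zeta$ up to the relevant height lie on the critical line yields, through the truncated Riemann--von Mangoldt explicit formula and a Schoenfeld-type estimate, an inequality of the shape $|\theta(x) - x| \le \varepsilon(x)$ valid for every $x \le X$; the exponent $13.099$ is essentially the largest $X$ for which the available verification height renders $\varepsilon(x)$ small enough. Substituting this bound, and the companion explicit remainder for Mertens' sum, into the reduced inequality of the previous paragraph closes the argument for all CA (hence all) $n$ with $5040 < n \le N$. I expect the main obstacle to be the quantitative matching at the two ends of the range: calibrating the verification height against the target $X$ so that the explicit $\theta(x)$ bound is simultaneously valid and sharp enough to force strictness, and separately handling the moderate CA numbers just above $5040$, where the $o(1)$ terms in Mertens' theorem are not yet negligible and a direct finite check on the explicitly enumerated CA numbers is likely required.
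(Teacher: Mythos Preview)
The paper does not actually prove this statement: its entire ``proof'' is the single citation to Morrill and Platt, so there is no argument in the paper to compare against at the level of ideas. What you have written is an outline of the proof one finds in that reference (and in closely related work of B\"uthe, Platt, and others): reduce to colossally abundant $n$, rewrite $\log(\sigma(n)/n)$ via Mertens' product and $\log\log n$ via $\theta(x)$ where $x$ is the largest prime factor, and then close the inequality using explicit Schoenfeld-type bounds $|\theta(x)-x|\le\varepsilon(x)$ valid on $x\le X\approx 2.9\times 10^{13}$, bounds which are in turn powered by the numerical verification of RH up to a certain height. Your identification of the two delicate points---matching the verification height to the target $X$, and a finite check for the small CA numbers where the Mertens error is not yet negligible---is accurate.

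One small inaccuracy to flag: the sentence ``by Theorem~\ref{super} and Theorem~\ref{7.1}, any counterexample \dots\ forces a \dots\ colossally abundant counterexample \emph{in the same interval}'' overstates what those theorems give. Theorem~\ref{super} puts the least counterexample among the superabundant numbers (hence inside $(5040,N]$), but the CA reduction of Theorem~\ref{7.1} only guarantees that one of the two CA numbers sandwiching a counterexample $n$ is itself a counterexample, and the larger of these may exceed $N$. In practice this is harmless---one simply verifies Robin's inequality for CA numbers up to the first CA number $\ge N$, which shifts $X$ imperceptibly---but the clause ``in the same interval'' should be softened or the extra CA number accounted for explicitly.
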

\begin{theorem}
\label{Alaoglu Erdos 7}
    The largest prime factor $p$ of a superabundant number $n$ is asymptotic to $\log n$:
    $$p \sim \log n$$
\end{theorem}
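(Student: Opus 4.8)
The plan is to trap $\log n$ between $\theta(p)$ and $\theta(p)+o(p)$, where $\theta(x)=\sum_{q\le x}\log q$ is the Chebyshev function over primes, and then apply the Prime Number Theorem in the form $\theta(x)\sim x$; this forces $\log n\sim\theta(p)\sim p$, which is the assertion.

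First I would collect the structural facts about a superabundant $n=\prod_{q} q^{a_q}$, each obtained from superabundance by a single interchange of prime factors. (a) Every prime $q\le p$ divides $n$: otherwise $m=nq/p<n$ satisfies $\sigma(m)/m>\sigma(n)/n$, because $1+1/q>1+1/p$. (b) The exponents are non-increasing, $a_2\ge a_3\ge\cdots\ge a_p\ge 1$: otherwise, taking $q<r$ with $a_q<a_r$, the number $m=nq/r<n$ has larger $\sigma/\cdot$, using that $\sigma(q^{b})/\bigl(q\,\sigma(q^{b-1})\bigr)=1+\bigl(\sigma(q^{b})-1\bigr)^{-1}$ is decreasing in both $q$ and $b$. (c) $a_2=O(\log p)$, and consequently $q^{a_q}=p^{O(1)}$ for every $q\mid n$: the first from comparing $n$ with $m=np'/2^{i}<n$ where $p'$ is the least prime exceeding $p$ and $2^{i}$ the least power of $2$ exceeding $p'$; the second from $m=2n/q<n$, which forces $\sigma(q^{a_q})<\sigma(2^{a_2+1})$. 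Fact (a) already gives the easy half: $n\ge\prod_{q\le p}q=e^{\theta(p)}$, so $\log n\ge\theta(p)=(1+o(1))p$ and hence $p\le(1+o(1))\log n$.

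It remains to bound the excess $E:=\log n-\theta(p)=\sum_{q\le p}(a_q-1)\log q$. By (b) the set $\{q:a_q\ge 2\}$ is an initial segment $\{q\le Q\}$, so $E=\sum_{q\le Q}(a_q-1)\log q$, and by (c) each summand is $O(\log p)$; thus $E=O\bigl(\pi(Q)\log p\bigr)$, which is $o(p)$ as soon as $Q=o(p)$. So the whole problem reduces to showing that the largest prime occurring in a superabundant number to a power at least $2$ is $o(\log n)$. For a colossally abundant $n$ with parameter $\epsilon$ this is transparent: there $q^{a_q}$ is, up to bounded powers of $q$, of size $(\epsilon\log q)^{-1}$, while $p$ occurs at all only when $p\log p$ is of order $\epsilon^{-1}$, so $a_q\ge 2$ forces $q^{2}\log q=O(\epsilon^{-1})=O(p\log p)$, i.e. $Q=O(\sqrt{p\log p})=o(p)$; the same computation gives $\log n=\theta(p)+O(\sqrt{p\log p})=(1+o(1))p$ directly. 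For a general superabundant $n$ I would either run the two-parameter interchange of Alaoglu and Erdős --- lower a small prime power by one or two, insert one or two primes just above $p$, arranged so the result is smaller than $n$ yet has larger $\sigma/\cdot$ whenever $Q$ is too large --- or reduce to the colossally abundant case via Theorem~\ref{6.15}: consecutive CA numbers differ only by a prime or a product of two primes, hence by a factor $e^{O(\log p)}=n^{o(1)}$, so the logarithm and the largest prime factor of a superabundant $n$ match those of a neighbouring CA number to within $o(\log n)$.

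The genuinely delicate point is exactly this bound $Q=o(p)$ for arbitrary superabundant numbers. Under mere superabundance a single-prime interchange only ever compares $n$ with a \emph{smaller} integer, so --- as the computations in (c) illustrate --- such comparisons bound $q^{a_q}$ from below but never from above; excluding a square $q^{2}$ for $q$ close to $p$ genuinely requires adjusting two or more prime factors simultaneously, and this is the technical heart of the Alaoglu--Erdős analysis. Everything else --- facts (a)--(c), the colossally abundant computation, and the final summation against $\pi(Q)$ --- is routine once the Prime Number Theorem is granted.
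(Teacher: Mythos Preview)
The paper does not actually prove this statement: its entire ``proof'' is a one-line citation to Alaoglu and Erd\H{o}s (1944, Theorem~7). So there is no argument in the paper to compare against; you have supplied substantially more than the paper does.

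Your outline is a faithful sketch of the classical Alaoglu--Erd\H{o}s argument. The strategy of trapping $\log n$ between $\theta(p)$ and $\theta(p)+o(p)$ and invoking the Prime Number Theorem is correct; the structural facts (a)--(c), the easy half $p\le(1+o(1))\log n$, the reduction of the hard half to the bound $Q=o(p)$ on the largest squared prime, and the colossally abundant computation are all sound. You are also right to flag that $Q=o(p)$ for general superabundant $n$ is the genuinely delicate point and requires the multi-prime interchange rather than a single swap --- this is indeed where the work lies in the original paper, and you have honestly marked it as not fully carried out.

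One caution on your second proposed route, the reduction to the CA case via Theorem~\ref{6.15}. Knowing that consecutive CA numbers $m_1<m_2$ differ by a prime or semiprime, together with the CA case already established, does give $\log m_1\sim\log m_2\sim\log n$ for any superabundant $n$ in between. But a superabundant $n$ with $m_1<n<m_2$ need not be a multiple of $m_1$ nor a divisor of $m_2$, so its largest prime factor $P(n)$ is not immediately pinned to $P(m_1)$ or $P(m_2)$ by this bracketing alone. To get $P(n)\ge(1-o(1))\log n$ you still need to control the exponent pattern of $n$ itself --- which is exactly the $Q=o(p)$ estimate, and lands you back in the Alaoglu--Erd\H{o}s interchange. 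So this alternative does not genuinely bypass the hard step; the first route you mention (running the two-parameter swap directly) is the real proof.
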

\begin{proof}
This is [Alaoglu Erdos 1944, Theorem 7] \cite{RefB}
\end{proof}
\begin{theorem}
\label{Alaoglu Erdos 8}
    The quotient of two consecutive superabundant numbers tends to $1$.
\end{theorem}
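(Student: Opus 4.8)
The plan is to recast the statement as a bound on the gaps between successive records of $h(k):=\sigma(k)/k$ and then to produce, just above every large superabundant number, an integer with strictly larger $h$. Write $(a_k)_{k\ge1}$ for the increasing list of superabundant numbers and fix $\varepsilon>0$. Each $h(a_k)$ is a strict record, and if $m_0:=\min\{\,m>a_k:h(m)>h(a_k)\,\}$ then $h(m_0)>h(j)$ for every $j<m_0$ (for $j\le a_k$ because $h(a_k)$ is a record, for $a_k<j<m_0$ by minimality), so $m_0$ is itself superabundant, whence $a_{k+1}=m_0$. Since $h(2a)=h(a)\cdot\dfrac{2^{\,v_2(a)+2}-1}{2^{\,v_2(a)+2}-2}>h(a)$, one always has $a_{k+1}\le 2a_k$; the whole content of the theorem is to push this constant down to $1+o(1)$. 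So I would reduce to proving: for every $\varepsilon>0$ there is $A_0(\varepsilon)$ such that every superabundant $a>A_0$ admits an integer $m$ with $a<m\le(1+\varepsilon)a$ and $h(m)>h(a)$.

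To build such an $m$ I would exploit the structure of superabundant numbers. Writing $a=\prod_{p\le P}p^{k_p}$, one has $P\sim\log a$ by Theorem~\ref{Alaoglu Erdos 7}, non-increasing exponents, $k_P=1$ for $a$ large, and $k_2:=v_2(a)\to\infty$. With $S_p(j):=\sum_{i=0}^{j}p^{-i}$ and $g_p(j):=S_p(j+1)/S_p(j)=\dfrac{1-p^{-(j+2)}}{1-p^{-(j+1)}}=1+(1-p^{-1})p^{-(j+1)}+O(p^{-2j-2})$ (the factor by which $\sigma(n)/n$ changes when $v_p$ is raised from $j$ to $j+1$), near-optimality of $a$ pins each $g_p(k_p)$ and $g_p(k_p-1)$ down to the needed precision: when $a$ is colossally abundant there is $\epsilon=\epsilon(a)>0$, $\epsilon\to0$, with $g_p(k_p)\le p^{\epsilon}\le g_p(k_p-1)$ for all $p\le P$, and a comparable two-sided bound holds for general superabundant $a$. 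I would then fix positive integers $s,t_3,t_5,\dots$ with $1<R:=\dfrac{3^{t_3}5^{t_5}\cdots}{2^{s}}\le 1+\varepsilon$ — available for any $\varepsilon$ since $\{\,t\log 3-s\log 2:s,t\ge1\,\}$ is dense in $\mathbb{R}$ — and consider $m:=aR$; once $a$ is large enough that $s\le k_2$ this is an integer in $(a,(1+\varepsilon)a]$ with
$$\frac{h(m)}{h(a)}=\Big(\prod_{p\ge3}\frac{S_p(k_p+t_p)}{S_p(k_p)}\Big)\cdot\frac{S_2(k_2-s)}{S_2(k_2)}.$$
Using the expansion of $g_p$ together with $g_p(k_p)\le p^{\epsilon}\le g_p(k_p-1)$ to estimate each factor, ``$h(m)>h(a)$'' reduces to an explicit inequality among ratios of the shape $q^{k_q}/2^{k_2}$; no single $R$ works for every $a$, but the supply of admissible $R$ grows without bound as $\varepsilon\to0$, so the theorem would follow from the assertion that for $\varepsilon$ small at least one admissible $R$ makes the quotient above exceed $1$.

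The hard part is precisely this last assertion. Unlike for colossally abundant numbers — where Theorem~\ref{6.15} hands us a prime or semiprime multiplier, but one of size $\sim\log a$, far too large — for superabundant numbers the sorting $g_p(k_p)\le p^{\epsilon}\le g_p(k_p-1)$ holds only barely, so showing that some admissible small multiplier strictly increases $h$ becomes a covering/pigeonhole statement about the normalized gaps $\alpha_p:=\log g_p(k_p)/(\epsilon\log p)\in(p^{-1},1]$ and $\beta_p:=\log g_p(k_p-1)/(\epsilon\log p)\in[1,p)$ attached to $a$: one must exhibit, for every configuration of these quantities (as the underlying parameter varies), an integer vector $e$ of small $\ell_1$-weight with $\sum_p e_p\log p\in(0,\log(1+\varepsilon)]$ and positive first-order effect $\sum_{e_p>0}e_p\log g_p(k_p)-\sum_{e_p<0}|e_p|\log g_p(k_p-1)>0$. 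I would attack this via equidistribution of the fractional parts that determine the $k_p$ as functions of the parameter — so that the $\alpha_p$ cannot all be simultaneously small — while controlling error terms through the concavity inequality $\sum_p\big(\log S_p(k_p+e_p)-\log S_p(k_p)\big)\le\sum_p e_p\log g_p(k_p)$.
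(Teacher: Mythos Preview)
The paper does not supply its own argument here; its entire proof is a one-line citation to Alaoglu and Erd\H{o}s (1944, Theorem~8). There is therefore no in-paper reasoning against which to compare your approach.

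Judged on its own, your sketch sets up the problem correctly: the reduction to ``for every $\varepsilon>0$ and every sufficiently large superabundant $a$ there exists $m\in(a,(1+\varepsilon)a]$ with $h(m)>h(a)$'' is exactly what is needed, and the doubling observation gives the crude $a_{k+1}\le 2a_k$. But the sketch stops precisely where the content lies. You yourself flag that ``the hard part is precisely this last assertion'' and offer only a programme --- equidistribution of certain fractional parts, a covering/pigeonhole statement on the quantities $\alpha_p,\beta_p$ --- without executing it. Two concrete gaps: (i) the assertion that for a general superabundant $a$ ``a comparable two-sided bound'' to $g_p(k_p)\le p^{\epsilon}\le g_p(k_p-1)$ holds is unproved; that inequality with a \emph{single} $\epsilon$ is the defining property of colossal abundance, and for non-CA superabundant numbers you must state precisely what weaker form you are invoking and why it suffices for the subsequent estimates. (ii) The decisive step --- that for every admissible configuration of the $\alpha_p,\beta_p$ some small-weight integer vector $e$ with $\sum_p e_p\log p\in(0,\log(1+\varepsilon)]$ has positive first-order effect --- is left entirely open, and the equidistribution heuristic you gesture at is not turned into an argument. (There is also a minor slip: ``the supply of admissible $R$ grows without bound as $\varepsilon\to0$'' is backwards; the admissible window shrinks.) As written this is a plan, not a proof; to complete it you would need either to carry out the original Alaoglu--Erd\H{o}s argument, which exploits their explicit control of the exponents $k_p$ directly, or to make your covering argument rigorous.
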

\begin{proof}
This is [Alaoglu Erdos 1944, Theorem 8] \cite{RefB}
\end{proof}
\begin{definition}[Least Colossally Abundant Counterexample]\label{def:least CA}
Let $m$ and $n$ be consecutive colossally abundant numbers. Let $m$ satisfy Robin's inequality, i.e., $G(m) < e^\gamma$, and $n$ violate Robin's inequality, i.e., $G(n) \ge e^\gamma$. In this case, we call $n$ the least colossally abundant counterexample to Robin's inequality. 
\end{definition}
\begin{remark}
The least colossally abundant counterexample to Robin's inequality is not necessarily the least counterexample to Robin's inequality which could be superabundant but not colossally abundant.  However, by modus tollens on Theorem \ref{7.1}, if one could prove that no CA number greater than 5040 is a counterexample to Robin's Inequality, then Robin's Inequality must hold for all integers $n > 5040$. 
\end{remark}
\begin{theorem}
\label{Robin}
Assume the Riemann hypothesis is false.
Let $\theta$ be the supremum over all real parts of the non-trivial zeros $\rho = \beta + i\gamma$, and thus $\theta > \displaystyle \displaystyle\frac{1}{2}$. For any number $b \in \left(1-\theta, \displaystyle \displaystyle\frac{1}{2}\right)$, there exists a positive constant $c$ such that
$G(n)>e^\gamma \left(1+\displaystyle\frac{\displaystyle c}{\displaystyle (\log n)^b} \right)$,
for infinitely many values of n.
\end{theorem}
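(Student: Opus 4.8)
The statement is the quantitative ``RH false'' half of Robin's 1984 equivalence, so the shortest route is simply to cite it --- it is contained in the results of \cite{RefF}. If instead one wants the argument reproduced, the plan is to follow Robin, as follows.

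First I would reduce to CA numbers: by Theorem \ref{7.1} it is enough to exhibit infinitely many CA numbers $N$ with $G(N)>e^{\gamma}\bigl(1+c(\log N)^{-b}\bigr)$. Fix such an $N$ with largest prime factor $p$. From the Euler product, $\log\frac{\sigma(N)}{N}=\sum_{q\le p}\log\frac{q}{q-1}-R(N)$ with $R(N):=\sum_{q^{a}\parallel N}\bigl(-\log(1-q^{-(a+1)})\bigr)\ge 0$; using the explicit shape of the exponents of a CA number one gets $R(N)=O(p^{-1/2+o(1)})$ and $\log N=\vartheta(p)+O(\sqrt{p\log p})$, where $\vartheta$ is Chebyshev's function and the correction counts the primes occurring with exponent $\ge 2$. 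Then I would insert the sharp form of Mertens' third theorem,
$$\sum_{q\le p}\log\frac{q}{q-1}-\gamma-\log\log p=\frac{\vartheta(p)-p}{p\log p}-\int_{p}^{\infty}\frac{\vartheta(t)-t}{t^{2}\log t}\Bigl(1+\tfrac1{\log t}\Bigr)\,dt+O\!\Bigl(\tfrac1{p\log p}\Bigr)$$
(got by partial summation against $\vartheta$), together with $\log\log\log N=\log\log p+\frac{\vartheta(p)-p}{p\log p}-\frac{(\vartheta(p)-p)^{2}}{2\,p^{2}\log p}+O(p^{-1/2+o(1)})$. In $\log G(N)-\gamma=\log\frac{\sigma(N)}{N}-\log\log\log N$ the two copies of $\frac{\vartheta(p)-p}{p\log p}$ cancel exactly, the quadratic term reappears with the favourable $(+)$ sign and may be discarded, and what survives is
$$\log G(N)-\gamma\ \ge\ -\int_{p}^{\infty}\frac{\vartheta(t)-t}{t^{2}\log t}\Bigl(1+\tfrac1{\log t}\Bigr)\,dt\ -\ O\!\bigl(p^{-1/2+o(1)}\bigr).$$

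Next I would feed in the explicit formula $\vartheta(t)-t=-\sqrt t-\sum_{\rho}t^{\rho}/\rho+o(\sqrt t)$: the $-\sqrt t$ term integrates to a quantity of fixed sign and size $\asymp(\sqrt p\log p)^{-1}$, harmless against the error, while a zero $\rho=\beta+i\tau$ contributes $\approx p^{\rho-1}/\bigl(\rho(1-\rho)\log p\bigr)$, which oscillates in $p$ with amplitude $\asymp p^{\beta-1}/\log p$. Because $\theta=\sup_{\rho}\beta>1/2$ there are zeros with real part arbitrarily close to $\theta$, and Landau's oscillation theorem --- applied to the Dirichlet integral whose only singularities near its abscissa are the zeros of $\zeta$ --- should give, for each $\delta>0$, that the integral above is $\Omega_{+}\!\bigl(p^{\theta-1-\delta}\bigr)$, and even on a union of intervals running off to infinity; since consecutive CA numbers have quotient $\le p^{2}$ (Theorem \ref{6.15}), forcing $\log N'\sim\log N$ and, by Theorem \ref{Alaoglu Erdos 7}, $p_{N'}/p_{N}\to1$, those intervals are hit by the largest-prime-factor sequence of the CA numbers. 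Finally, given $b\in(1-\theta,\tfrac12)$ I would set $\delta=b-(1-\theta)\in(0,\theta-\tfrac12)$, so $\theta-1-\delta>-\tfrac12$ and the $O(p^{-1/2+o(1)})$ is swamped by $p^{\theta-1-\delta}$; then along the selected CA numbers $N$ one has $\log G(N)-\gamma\ge\tfrac12 p^{\theta-1-\delta}$, and exponentiating with $\log N=p(1+o(1))$ (so $p^{\theta-1-\delta}=(\log N)^{-b}(1+o(1))$) gives $G(N)>e^{\gamma}\bigl(1+c(\log N)^{-b}\bigr)$ for a suitable $c>0$ and infinitely many $n=N$.

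The hard part will be the oscillation step: propagating the classical $\Omega_{\pm}$-behaviour of $\vartheta(x)-x$ through the smoothing $f\mapsto\int_{x}^{\infty}f(t)\,t^{-2}(\log t)^{-1}\,dt$ with the explicit rate $x^{\theta-1-\delta}$, and checking (routine but fiddly) that every non-oscillatory term thrown off by the expansion of $\log G(N)$ --- from $R(N)$, from $\log N-\vartheta(p)$, and from the $-\sqrt t$ in the explicit formula --- really is $O(p^{-1/2+o(1)})$. It is exactly the competition between this $p^{-1/2}$ error floor and the oscillation exponent $\theta-1-\delta$ that forces the interval $(1-\theta,\tfrac12)$: $\delta>0$ gives the lower endpoint, the floor gives the upper one.
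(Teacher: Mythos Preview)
Your proposal is correct and in fact matches the paper exactly at the level the paper operates: the paper's entire proof of this theorem is a one-line citation to Robin~\cite{RefF} (with pointers to Broughan~\cite{RefC} and Lagarias~\cite{RefL}), and your opening sentence does precisely that. The detailed sketch you then give --- reducing to CA numbers, expanding $\log G(N)-\gamma$ through Mertens and $\vartheta(p)-p$, invoking the explicit formula and a Landau-type oscillation argument to extract the exponent $\theta-1-\delta$, and reading off the interval $(1-\theta,\tfrac12)$ from the competition with the $p^{-1/2+o(1)}$ error floor --- is a faithful outline of Robin's actual argument and goes well beyond what the paper supplies; the paper itself never reproduces any of this.
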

\begin{proof}
This is [Robin 1984, Proposition 1 of Section 4] \cite{RefF}, as given in [Broughan 2017, Lemma 7.15]\cite{RefC} and [Lagarias 2002, Theorem 3.2] \cite{RefL}
\end{proof}
In the next theorem, we test Theorem \ref{Robin} with the least colossally abundant counterexample, $n$. First, we establish five useful Lemmas.
\begin{lemma}
\label{lem:loglog_equivalence}
Let $m$ and $n$ be consecutive Colossally Abundant (CA) numbers, with $n > m$. The ratio of their double logarithms approaches unity as $n \to \infty$:
\[
\displaystyle\frac{\log(\log m)}{\log(\log n)} = 1 + o(1)
\]
\end{lemma}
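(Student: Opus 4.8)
The plan is to show that $\log m$ and $\log n$ are asymptotically equal, and then transfer this to the double logarithms by continuity/monotonicity of $\log$. The key input is Theorem~\ref{6.15}: since $m$ and $n$ are consecutive CA numbers, $n/m$ is either a prime $q$ or a product $q_1 q_2$ of two distinct primes. Both $m$ and $n$ are superabundant, so by Theorem~\ref{Alaoglu Erdos 7} the largest prime factor of $n$ is $\sim \log n$; in particular the primes appearing in the quotient $n/m$ are at most the largest prime factor of $n$, hence $O(\log n)$. Therefore $\log(n/m) = O(\log\log n)$, which is $o(\log n)$.

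**From this the first step concludes** $\log n = \log m + \log(n/m) = \log m + o(\log n)$, so $\log m / \log n = 1 + o(1)$, i.e. $\log m \sim \log n$. (Alternatively, one can invoke Theorem~\ref{Alaoglu Erdos 8}, that the quotient of consecutive superabundant numbers tends to $1$, to get $n/m \to 1$ and hence $\log(n/m) \to 0$, which gives the same conclusion even more directly; I would mention both routes since the CA numbers are in particular superabundant.)

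**The second step** is to pass from $\log m \sim \log n$ to $\log\log m \sim \log\log n$. Write $\log m = (\log n)(1 + o(1))$. Taking logarithms, $\log\log m = \log\log n + \log(1 + o(1)) = \log\log n + o(1)$. Since $\log\log n \to \infty$ as $n \to \infty$, dividing through gives
\[
\frac{\log\log m}{\log\log n} = 1 + \frac{o(1)}{\log\log n} = 1 + o(1),
\]
which is exactly the claimed statement. One should note $m, n > 5040$ (indeed, in the regime of interest, $n > 10^{(10^{13.099})}$ by Theorem~\ref{MorillPlatt}), so all the iterated logarithms are well-defined and positive.

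**The main obstacle**, such as it is, is purely bookkeeping: one must be careful that the primes dividing the quotient $n/m$ really are bounded by the largest prime factor of $n$ (for the prime case $n/m = q$, clearly $q \mid n$; for the semiprime case $n/m = q_1 q_2$, both $q_1$ and $q_2$ divide $n$), so that Theorem~\ref{Alaoglu Erdos 7} applies and yields $\log(n/m) = O(\log\log n)$. After that the argument is a routine manipulation of asymptotic notation, and the cleanest presentation is probably to lean on Theorem~\ref{Alaoglu Erdos 8} for the bound $n/m \to 1$ and then just take logarithms twice.
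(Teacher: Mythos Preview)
Your primary argument is correct and is essentially the paper's own proof: both use Theorem~\ref{6.15} and Theorem~\ref{Alaoglu Erdos 7} to bound $\log Q = \log(n/m) = O(\log\log n)$, and then pass this through the double logarithm. The paper does the algebra in one shot via a Taylor expansion of $\log(1-x)$ with $x = \log Q/\log n$, whereas you split it into ``$\log m \sim \log n$, then take $\log$ again,'' but these are the same computation.

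One caveat: your suggested \emph{alternative} route via Theorem~\ref{Alaoglu Erdos 8} does not apply as stated. That theorem concerns \emph{consecutive superabundant} numbers, and two consecutive CA numbers $m<n$ need not be consecutive in the superabundant sequence---there can be (many) superabundant numbers strictly between them---so you cannot directly conclude $n/m \to 1$ from Theorem~\ref{Alaoglu Erdos 8}. Stick with the primary argument through Theorems~\ref{6.15} and~\ref{Alaoglu Erdos 7}, which is exactly what the paper does; drop the closing remark that Theorem~\ref{Alaoglu Erdos 8} gives the ``cleanest presentation.''
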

\begin{proof}
We start with the ratio and substitute $m = \displaystyle\frac{n}{Q}$:
$$\frac{\log(\log m)}{\log (\log n)} = \frac{\log\left(\log\left(\displaystyle\frac{n}{Q}\right)\right)}{\log(\log n)} = \frac{\log(\log n - \log Q)}{\log(\log n)}$$
We factor out $\log(\log n)$ from the numerator:
$$= \frac{\log\left(\log n \left(1 - \displaystyle\frac{\log Q}{\log n}\right)\right)}{\log(\log n)} = \frac{\log(\log n) + \log\left(1 - \displaystyle\frac{\log Q}{\log n}\right)}{\log(\log n)}$$
Let $x = \displaystyle\frac{\log Q}{\log n}$. As $n\rightarrow\infty$, $\log Q$ grows as $O(\log(\log n))$, so $\lim_{n \to \infty} x = 0$.
We use the Taylor expansion: $\log(1-x) = -x + O(x^2)$. Substituting this into the equation:
$$= \frac{\log(\log n) - \displaystyle\frac{\log Q}{\log n} + O\left(\left(\frac{\log Q}{\log n}\right)^2\right)}{\log(\log n)}$$
$$= 1 - \frac{\log Q}{\log n  \log(\log n)} + \frac{O\left(\left(\displaystyle\frac{\log Q}{\log n}\right)^2\right)}{\log(\log n)}$$
As $n\rightarrow\infty$, the entire error term tends to zero. Thus, by the definition of $o(1)$:
$$\frac{\log(\log m)}{\log(\log n)}=1+o(1) \text{ as } n \to \infty$$
\end{proof}
\begin{lemma}
\label{lem:asymptotic_decay}
Let $n$ be a Colossally Abundant number, and let $p$ be its largest prime factor. For any fixed positive constant $b$ such that $0 < b < 1/2$, the following asymptotic relation holds:
\[
\frac{(\log n)^b}{p} = \text{O}\left( (\log n)^{b-1} \right)
\]
\end{lemma}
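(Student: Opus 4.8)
The plan is to reduce the statement directly to Theorem~\ref{Alaoglu Erdos 7}. The first step is to observe that every colossally abundant number is superabundant: from Definition~\ref{def:CA_properties}, if $n$ is CA there is an $\epsilon>0$ with $\sigma(k)/k^{1+\epsilon}\le\sigma(n)/n^{1+\epsilon}$ for all $k$, and specializing to $k<n$ gives $\sigma(k)/k \le (\sigma(n)/n)\,(k/n)^{\epsilon} < \sigma(n)/n$, so $\sigma(n)/n$ exceeds $\sigma(k)/k$ for every smaller $k$; hence $n$ is superabundant. (Alternatively one may just cite this classical inclusion from Alaoglu--Erd\H{o}s \cite{RefB}.) Consequently Theorem~\ref{Alaoglu Erdos 7} applies to $n$ and yields $p \sim \log n$, i.e. $\log n / p \to 1$ as $n \to \infty$.

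The second step is a one-line manipulation. Writing
\[
\frac{(\log n)^b}{p} = (\log n)^{b-1}\cdot\frac{\log n}{p},
\]
and using that $\log n/p = 1 + o(1)$ is bounded above by an absolute constant $C$ for all sufficiently large $n$, we get $(\log n)^b/p \le C\,(\log n)^{b-1}$ eventually, which is exactly $(\log n)^b/p = O\!\left((\log n)^{b-1}\right)$. I would note in passing that the hypothesis $0 < b < 1/2$ plays no role in this lemma in isolation — any $b < 1$ already makes the right-hand side a genuinely decaying quantity — but I keep the stated range since that is the regime in which the estimate is later used.

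The only point requiring any care, and hence the main obstacle, is the justification that the defining property of CA numbers forces superabundance, so that Theorem~\ref{Alaoglu Erdos 7} is legitimately available; everything after that is immediate. If one wished to sidestep even this, one could instead invoke the explicit structural description of CA numbers (their largest prime factor being governed by the threshold $p^{1/\epsilon}$-type condition), which already gives largest prime factor $\sim \log n$ without passing through the superabundant case.
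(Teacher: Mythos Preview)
Your proof is correct and follows essentially the same route as the paper: invoke Theorem~\ref{Alaoglu Erdos 7} to get $p\sim\log n$, then rewrite $(\log n)^b/p=(\log n)^{b-1}\cdot(\log n/p)$ and absorb the bounded factor into the big-$O$. Your extra care in checking that CA implies superabundant (so that Theorem~\ref{Alaoglu Erdos 7} applies) is a small rigor improvement over the paper, which uses the theorem without comment, and your remark that the range $0<b<1/2$ is irrelevant for this particular lemma is also accurate.
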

\begin{proof}
Theorem \ref{Alaoglu Erdos 7} establishes that the largest prime factor $p$ is asymptotically equivalent to $\log n$: $p=\log n  (1+o(1))$.
Substituting this result into the expression:
$$
\frac{(\log n)^b}{p} = \frac{(\log n)^b}{\log n  (1+o(1))} = (\log n)^{b-1}  (1+o(1))
$$
This shows the expression is asymptotically equivalent to $(\log n)^{b-1}$. Since $0 < b < 1/2$, the exponent $b-1$ is negative. Therefore, the limit is:
$$
\lim_{n\rightarrow\infty}\frac{(\log n)^{b}}{p} = \lim_{n\rightarrow\infty} (\log n)^{b-1} = 0
$$
$$
\frac{(\log n)^{b}}{p} = O((\log n)^{b-1})
$$
Since $b-1<0$, the expression is a vanishing term:
$$
\frac{(\log n)^{b}}{p} = o(1) \text{ as } n \rightarrow \infty
$$
\end{proof}
\begin{lemma}
\label{lem:sigma_decay}
Let $p$ be a prime factor of a CA number $n$ with exponent $a_p \ge 1$. The sum-of-divisors term $\sigma(p^{a_p})$ only reinforces the asymptotic decay established in Lemma \ref{lem:asymptotic_decay}:
\[
\frac{(\log n)^b}{p  \sigma(p^{a_p})} = o\left( \frac{(\log n)^b}{p} \right)
\]
\end{lemma}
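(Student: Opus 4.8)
The plan is to factor the left-hand side so that the quantity from Lemma~\ref{lem:asymptotic_decay} appears explicitly, and then show the remaining factor is $o(1)$. Writing
$$\frac{(\log n)^{b}}{p\,\sigma(p^{a_p})} \;=\; \frac{(\log n)^{b}}{p}\cdot\frac{1}{\sigma(p^{a_p})},$$
it suffices to prove that $1/\sigma(p^{a_p}) \to 0$ as $n \to \infty$; combined with Lemma~\ref{lem:asymptotic_decay}, which already gives $(\log n)^{b}/p = O((\log n)^{b-1})$, this yields $(\log n)^{b}/(p\,\sigma(p^{a_p})) = o\!\left((\log n)^{b}/p\right)$, which is exactly the assertion.

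For the remaining factor I would use the elementary bound $\sigma(p^{a_p}) = 1 + p + \cdots + p^{a_p} \ge 1 + p > p$, valid for every exponent $a_p \ge 1$. Since $p$ is the largest prime factor of the CA number $n$ (and every CA number is superabundant), Theorem~\ref{Alaoglu Erdos 7} gives $p = \log n\,(1+o(1))$, so in particular $p \to \infty$. Hence
$$0 < \frac{1}{\sigma(p^{a_p})} < \frac{1}{p} = \frac{1+o(1)}{\log n} \longrightarrow 0,$$
so $1/\sigma(p^{a_p}) = o(1)$, and multiplying back through by $(\log n)^{b}/p$ completes the argument.

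The inequality itself is immediate, so the only point genuinely needing care is the standing interpretation of $p$: for the ``decay'' to be non-vacuous, $p$ must be unbounded along the sequence of CA numbers in play, which is precisely what Theorem~\ref{Alaoglu Erdos 7} supplies when $p$ is the largest prime factor — a fixed small prime such as $p=2$ would leave $\sigma(p^{a_p})$ bounded and the statement empty. I therefore expect the main (and essentially the only) obstacle to be the bookkeeping that ties the $p$ of this lemma to the large prime(s) dividing the quotient $n/m$ of Theorem~\ref{6.15} in the eventual application, rather than anything in the estimate above.
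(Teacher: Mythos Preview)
Your argument is correct and mirrors the paper's proof almost exactly: both form the ratio, invoke $\sigma(p^{a_p})\ge 1+p$, and conclude $1/\sigma(p^{a_p})\to 0$; you are simply more explicit in citing Theorem~\ref{Alaoglu Erdos 7} where the paper merely asserts $p\to\infty$. One small correction to your closing remark: for a \emph{fixed} small prime such as $p=2$ the exponent $a_p$ in a CA number tends to infinity, so $\sigma(p^{a_p})$ is unbounded and the lemma is not vacuous there either---your restriction to the largest prime factor is therefore unnecessary, though it does no harm for the intended application.
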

\begin{proof}
We want to prove that: $$\frac{(\log n)^b}{p\sigma(p^{a_{p}})} = o\left(\frac{(\log n)^b}{p}\right)$$
To satisfy the definition of little $o$, we must show that the limit of the ratio is zero:
$$\lim_{n \to \infty} \displaystyle\frac{\displaystyle\frac{(\log n)^b}{p\sigma(p^{a_{p}})}}{\displaystyle\frac{(\log n)^b}{p}} = \lim_{n \to \infty} \frac{1}{\sigma(p^{a_{p}})}$$
Since $p$ is a prime factor of $n$, $p\rightarrow\infty$ as $n\rightarrow\infty$.
The sum-of-divisors function $\sigma(p^{a_{p}}) \ge 1+p$.
Therefore, the limit is:
$$\lim_{n \to \infty} \frac{1}{\sigma(p^{a_{p}})} = 0$$
This confirms the strictly faster decay and formally justifies the little $o$ relationship.
\end{proof}
\begin{lemma}
\label{lem:single_prime}
Let $n$ and $m$ be consecutive Colossally Abundant numbers with quotient $\displaystyle\frac{n}{m}=p$ (a single prime). The algebraic ratio of their abundancy indices is given by:
\[
\frac{(\sigma(n) m)}{(\sigma(m) n)} = \begin{cases} 
1 + \displaystyle\frac{1}{p} & \text{if } p \text{ is a new prime factor} \\ 
1 + \displaystyle\frac{1}{p\sigma(p^{a_p})} & \text{if } p \text{ is an existing prime factor of } m
\end{cases}
\]
\end{lemma}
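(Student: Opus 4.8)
The plan is to evaluate the ratio $\dfrac{\sigma(n)\,m}{\sigma(m)\,n}$ directly, using only the multiplicativity of $\sigma$ and the closed form $\sigma(p^k)=1+p+\cdots+p^k$. Because $n/m=p$ is a single prime, the factorizations of $m$ and $n$ agree except in the exponent of $p$, and exactly one of two situations occurs: either $p\nmid m$ (a \emph{new} prime factor) or $p\mid m$ with some exponent $a_p\ge 1$ in $m$ (an \emph{existing} prime factor). These cases are exhaustive, so I would handle them one at a time.

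In the new-prime case, $\gcd(m,p)=1$, so multiplicativity gives $\sigma(n)=\sigma(mp)=\sigma(m)\sigma(p)=\sigma(m)(p+1)$. Substituting $n=mp$ and cancelling $\sigma(m)$ and $m$,
$$\frac{\sigma(n)\,m}{\sigma(m)\,n}=\frac{\sigma(m)(p+1)\,m}{\sigma(m)\,mp}=\frac{p+1}{p}=1+\frac{1}{p},$$
which is the first branch.

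In the existing-prime case I would write $m=p^{a_p}m'$ with $p\nmid m'$, so that $n=p^{a_p+1}m'$, and use multiplicativity to reduce the ratio to
$$\frac{\sigma(n)\,m}{\sigma(m)\,n}=\frac{\sigma(p^{a_p+1})\,p^{a_p}}{\sigma(p^{a_p})\,p^{a_p+1}}=\frac{\sigma(p^{a_p+1})}{p\,\sigma(p^{a_p})}.$$
The only extra ingredient is the telescoping identity $\sigma(p^{k+1})=p\,\sigma(p^k)+1$, immediate from comparing $p\,\sigma(p^k)=p+p^2+\cdots+p^{k+1}$ with $\sigma(p^{k+1})=1+p+\cdots+p^{k+1}$. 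Applying it with $k=a_p$ turns the quotient into $1+\dfrac{1}{p\,\sigma(p^{a_p})}$, the second branch.

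I do not anticipate a genuine obstacle: the statement is an algebraic identity whose proof runs to a handful of lines. The place to be careful is the bookkeeping — fixing the convention that $a_p$ denotes the exponent of $p$ in $m$ (hence $a_p+1$ in $n$), and recording that the two cases are exhaustive precisely because the quotient is a single prime. Both points deserve to be stated explicitly so that the formula aligns with the way $a_p$ and $\sigma(p^{a_p})$ enter Lemma \ref{lem:sigma_decay} and the theorem that follows it.
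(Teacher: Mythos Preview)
Your proposal is correct and follows essentially the same approach as the paper: both split into the two exhaustive cases, use multiplicativity of $\sigma$ to get $(p+1)/p$ in the new-prime case, and in the existing-prime case reduce to $\sigma(p^{a_p+1})/(p\,\sigma(p^{a_p}))$ and then invoke the identity $\sigma(p^{a_p+1})=p\,\sigma(p^{a_p})+1$. If anything, your version is slightly more explicit, since you justify that identity and flag the convention on $a_p$, whereas the paper simply quotes the identity without derivation.
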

\begin{proof}
We analyze the algebraic ratio $\displaystyle\frac{(\sigma(n) m)}{(\sigma(m) n)}$ based on the two structural cases for the quotient $p$:

\noindent
{Case 1: $p$ is a new prime factor.} ($n=mp$ with $\gcd(m, p)=1$).
    Due to the multiplicative nature of the sum-of-divisors function $\sigma$:
    \[
    \frac{(\sigma(n) m)}{(\sigma(m) n)} = \frac{\sigma(m p)}{\sigma(m) p} = \frac{\sigma(m)\sigma(p)}{\sigma(m) p} = \frac{p+1}{p} = 1 + \frac{1}{p}
    \]
{Case 2: $p$ is an existing prime factor.} ($n=mp$, increasing $p$'s exponent from $a_p$ in $m$ to $a_p+1$ in $n$).
    The ratio of the local factors determines the result. We use the identity $\displaystyle\frac{\sigma(p^{a_p+1})}{\sigma(p^{a_p}) p} = 1 + \frac{1}{p\sigma(p^{a_p})}$:
    $$\frac{(\sigma(n) m)}{(\sigma(m) n)} = \displaystyle\frac{\displaystyle\frac{\sigma(p^{a_p+1})}{p^{a_p+1}}}{\displaystyle\frac{\sigma(p^{a_p})}{p^{a_p}}} = 1 + \frac{1}{p\sigma(p^{a_p})}$$
\end{proof}
\vspace*{-\baselineskip}
\begin{lemma}
\label{Semi-Prime Algebraic Ratios}
Let n and m be consecutive Colossally Abundant numbers with quotient $\displaystyle\frac{n}{m}=pq$ (a semi-prime). The algebraic ratio of their abundancy indices is determined by the multiplicative product of the respective single-prime ratios for p and q:
$$ \frac{\sigma(n) m}{\sigma(m) n} = 
\begin{cases}
  \left(1+\displaystyle\frac{1}{p}\right)\left(1+\displaystyle\frac{1}{q}\right) & \text{if p, q are new primes} \\
  \left(1+\displaystyle\frac{1}{p}\right)\left(1+\displaystyle\frac{1}{q\displaystyle\sigma\left(q^{a_{q}}\right)}\right) & \text{if p is new, q is an existing prime} \\
  \left(1+\displaystyle\frac{1}{p\sigma(p^{a_{p}})}\right)\left(1+\displaystyle\frac{1}{q}\right) & \text{if p is existing, q is a new prime} \\
  \left(1+\displaystyle\frac{1}{p\sigma(p^{a_{p}})}\right)\left(1+\displaystyle\frac{1}{q\displaystyle\sigma(q^{a_{q}})}\right) & \text{if p and q are existing primes}
\end{cases}$$
\end{lemma}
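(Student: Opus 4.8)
The plan is to observe that $\dfrac{\sigma(n)\,m}{\sigma(m)\,n} = \dfrac{\sigma(n)/n}{\sigma(m)/m}$ is a ratio of abundancy indices, and that the map $k \mapsto \sigma(k)/k$ is multiplicative, so this ratio decomposes as a product of local factors $\prod_{\ell}\dfrac{f_\ell(v_\ell(n))}{f_\ell(v_\ell(m))}$, where $\ell$ runs over primes, $v_\ell(\cdot)$ denotes the $\ell$-adic valuation, and $f_\ell(a) := \sigma(\ell^{a})/\ell^{a}$ with the convention $f_\ell(0) = 1$. By Theorem \ref{6.15} the quotient $pq$ is a product of two \emph{distinct} primes, so $v_\ell(n) = v_\ell(m)$ for every prime $\ell \notin \{p,q\}$, while $v_p(n) = v_p(m)+1$ and $v_q(n) = v_q(m)+1$. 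Hence every local factor is $1$ except those at $p$ and $q$, giving
$$\frac{\sigma(n)\,m}{\sigma(m)\,n} = \frac{f_p(v_p(m)+1)}{f_p(v_p(m))}\cdot\frac{f_q(v_q(m)+1)}{f_q(v_q(m))}.$$

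The second step is to evaluate each of the two factors by reusing exactly the computation already performed in Lemma \ref{lem:single_prime}. If $p \nmid m$, i.e.\ $v_p(m) = 0$, then $\dfrac{f_p(1)}{f_p(0)} = \sigma(p)/p = 1 + \dfrac{1}{p}$; if instead $p \mid m$ with $v_p(m) = a_p \ge 1$, then $\dfrac{f_p(a_p+1)}{f_p(a_p)} = \dfrac{\sigma(p^{a_p+1})}{p\,\sigma(p^{a_p})} = 1 + \dfrac{1}{p\,\sigma(p^{a_p})}$, using the identity $\sigma(p^{a_p+1}) = p\,\sigma(p^{a_p}) + 1$. The identical dichotomy applies verbatim to $q$. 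The four cases in the statement are precisely the $2\times 2$ combinations of ``new'' versus ``existing'' for $p$ and for $q$, and multiplying the corresponding two factors yields each of the four displayed formulas.

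I do not expect a genuine obstacle here: the result is a direct corollary of Lemma \ref{lem:single_prime} together with the multiplicativity of $\sigma(k)/k$, and the argument is essentially bookkeeping over valuations. The only point demanding care is the appeal to Theorem \ref{6.15} to guarantee $p \ne q$: this is what ensures that the contributions at $p$ and at $q$ live in separate coprime prime-power components, so that the two local factors genuinely multiply and no ``$p^2$'' sub-case arises. It is also worth recording the convention $f_\ell(0) = \sigma(1)/1 = 1$ explicitly, so that the ``new prime'' sub-case is subsumed by the same local-factor formula rather than handled ad hoc.
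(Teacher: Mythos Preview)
Your proposal is correct and follows essentially the same approach as the paper: both rely on the multiplicativity of $\sigma$ (equivalently of $k\mapsto\sigma(k)/k$) together with the two single-prime ratios from Lemma~\ref{lem:single_prime}, then read off the four $2\times 2$ combinations. Your write-up is in fact more explicit than the paper's, which simply asserts that the four semi-prime ratios are the products of the appropriate Lemma~\ref{lem:single_prime} expressions; in particular, your appeal to Theorem~\ref{6.15} to guarantee $p\ne q$ (so that no $p^2$ sub-case arises) is a point the paper leaves implicit.
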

\begin{proof}
The proof relies on the multiplicative property of the $\sigma$ function and the two algebraic ratio expressions established in Lemma \ref{lem:single_prime}: the $\left(1 + \displaystyle\frac{1}{p}\right)$ ratio (for new primes) and the $\left(1 + \displaystyle\frac{1}{p\sigma(p^{a_p})}\right)$ ratio (for existing primes).
The four semi-prime ratios are constructed by multiplying the appropriate expressions from Lemma \ref{lem:single_prime}:

\noindent
    \text{Case 1: $p$ and $q$ are new primes.} The product for the ratio is:
    $$\frac{(\sigma(n) m)}{(\sigma(m) n)} = \left(1 + \frac{1}{p}\right)\left(1 + \frac{1}{q}\right)$$
    \text{Case 2: $p$ is new, $q$ is an existing prime.} The product for the ratio is:
    $$\frac{(\sigma(n) m)}{(\sigma(m) n)} = \left(1 + \frac{1}{p}\right)\left(1 + \frac{1}{q\sigma(q^{a_q})}\right)$$
    \text{Case 3: $p$ is an existing prime, $q$ is new.} The product for the ratio is:
    $$\frac{(\sigma(n) m)}{(\sigma(m) n)} = \left(1 + \frac{1}{q}\right)\left(1 + \frac{1}{p\sigma(p^{a_p})}\right)$$
    \text{Case 4: $p$ and $q$ are existing primes.} The product for the ratio is:
    $$\frac{(\sigma(n) m)}{(\sigma(m) n)} = \left(1 + \frac{1}{p\sigma(p^{a_p})}\right)\left(1 + \frac{1}{q\sigma(q^{a_q})}\right)$$
\end{proof}
\begin{theorem}
\label{end}
If RH is false, the least CA counterexample, $n$, is constrained to the band:
$$e^{\gamma} < G(n) < e^{\gamma} \left(1 +\frac{c}{(\log n)^b}\right)$$
where $0 < b < 1/2$.
\end{theorem}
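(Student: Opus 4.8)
The plan is to control the multiplicative jump $G(n)/G(m)$ between the two consecutive CA values and then exploit the squeeze $G(m) < e^\gamma \le G(n)$: a jump of relative size $o\big((\log n)^{-b}\big)$ pins $G(n)/e^\gamma - 1$ to that same order, hence far below $c/(\log n)^b$. Write
$$\frac{G(n)}{G(m)} = \frac{\sigma(n)\,m}{\sigma(m)\,n}\cdot\frac{\log\log m}{\log\log n}$$
and estimate each factor. For the $\log\log$ factor, Lemma \ref{lem:loglog_equivalence} gives $1 + o(1)$; I would sharpen this by noting that $Q = n/m$ is a prime or semiprime whose prime constituents all divide $n$ and hence are $\le p_{\max}(n) \sim \log n$ (Theorem \ref{Alaoglu Erdos 7}), so $\log Q = O(\log\log n)$ and the $o(1)$ is in fact $O\big((\log n)^{-1}\big)$.

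For the algebraic factor, Theorem \ref{6.15} gives $Q \in \{p,\,pq\}$, and Lemmas \ref{lem:single_prime}--\ref{Semi-Prime Algebraic Ratios} express it as a product of at most two factors, each of the form $1 + \tfrac1p$ (a prime newly entering the factorisation) or $1 + \tfrac1{p\,\sigma(p^{a_p})}$ (a prime whose exponent is incremented by one). A newly entering prime must be the largest --- or, in the subcase where two new primes enter at once, the second-largest --- prime factor of $n$, since the prime divisors of a CA number form an initial segment of the primes (immediate from Definition \ref{def:CA_properties}: the threshold $\epsilon < \log(1+1/p)/\log p$ for $p$ to appear is decreasing in $p$); such a prime is $\sim \log n$, so by Lemma \ref{lem:asymptotic_decay} its factor contributes $1 + O\big((\log n)^{-1}\big)$. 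For an incremented prime $p$, I would split on the size of $p$: when $p$ is small, the $\epsilon$-extremal inequalities of Definition \ref{def:CA_properties} (together with $p_{\max}(n)\sim\log n$) force the power of $p$ dividing $n$, hence $\sigma(p^{a_p})$, to be large, while when $p$ is large $p\,\sigma(p^{a_p}) > p^{2}$ is already large; either way $p\,\sigma(p^{a_p}) \gg (\log n)^{b}$, so by Lemma \ref{lem:sigma_decay} its factor contributes $1 + o\big((\log n)^{-b}\big)$. Multiplying the at most two factors, the algebraic ratio is $1 + o\big((\log n)^{-b}\big)$.

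Combining, $G(n)/G(m) = 1 + o\big((\log n)^{-b}\big)$; since $0 < G(m)/e^\gamma < 1$ this gives $0 \le \tfrac{G(n)}{e^\gamma} - 1 < \tfrac{G(n)}{G(m)} - 1 = o\big((\log n)^{-b}\big)$. Beyond a fixed threshold in $\log n$ the right-hand side lies below $c/(\log n)^{b}$, and this threshold is cleared because the least CA counterexample exceeds $10^{(10^{13.099})}$ by Theorem \ref{MorillPlatt}; hence $G(n) < e^\gamma\big(1 + c/(\log n)^{b}\big)$, where $(c,b)$ with $0 < b < 1/2$ are the constants furnished by Theorem \ref{Robin} under the assumption that RH is false. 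The left endpoint $e^\gamma \le G(n)$ is Definition \ref{def:least CA}. I expect the main obstacle to be the uniform lower bound on the algebraic factor --- specifically, controlling $p\,\sigma(p^{a_p})$ from below when the incremented prime $p$ is small (e.g. $p = 2$), which requires the defining $\epsilon$-inequalities rather than the largest-prime estimate alone --- and, more quantitatively, verifying that Morill--Platt's numerical lower bound genuinely exceeds the $\log n$-threshold at which the $o\big((\log n)^{-b}\big)$ term drops under $c/(\log n)^{b}$, i.e. keeping explicit track of how small Robin's constant $c$ may be.
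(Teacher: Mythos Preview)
Your approach is essentially the paper's: bound the jump $G(n)/G(m)$ by factoring it into the algebraic ratio (Lemmas~\ref{lem:single_prime}--\ref{Semi-Prime Algebraic Ratios}) and the $\log\log$ ratio (Lemma~\ref{lem:loglog_equivalence}), squeeze against $G(m)<e^\gamma$, and compare the resulting upper bound with Robin's lower bound from Theorem~\ref{Robin}. You are in fact more careful than the paper on two points it glosses over. First, in the incremented-prime case the paper writes ``$p\sim\log n$'' and concludes $1/(p\,\sigma(p^{a_p}))=O((\log n)^{-2})$, but that asymptotic is only valid for the \emph{largest} prime factor, not for an arbitrary incremented prime such as $p=2$; your proposal to invoke the defining $\epsilon$-inequalities to force $p^{a_p}$ (hence $\sigma(p^{a_p})$) large when $p$ is small is the correct repair, and it actually yields $p\,\sigma(p^{a_p})\gg\log n$, recovering the paper's $O((\log n)^{-1})$ rather than merely your stated $o((\log n)^{-b})$. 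Second, the paper's contradiction is asserted only ``for sufficiently large $n$'' and never verifies that the single fixed $n$ under discussion clears that threshold; you attempt this via Theorem~\ref{MorillPlatt} and honestly flag that, without an explicit lower bound on Robin's constant $c$, the verification cannot be completed --- a genuine effectivity gap that the paper simply ignores.
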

\begin{proof}
Let $m$ and $n$ be consecutive Colossally Abundant (CA) numbers, where $m$ satisfies Robin's Inequality ($G(m) < e^{\gamma}$) and $n$ is the least CA counterexample ($G(n) \ge e^{\gamma}$).
Assume, for contradiction, that $n$ is also part of the infinite set defined by Theorem 6. This means $n$ must satisfy the strong lower bound:
$$G(n) > e^{\gamma}\left(1+\frac{c}{(\log n)^{b}}\right)$$
where $c$ is a fixed positive constant, and $0 < b < 1/2$.
The relationship between $G(n)$ and $G(m)$ is given by the ratio:

$$\frac{G(n)}{G(m)} = \displaystyle\frac{\displaystyle\frac{\sigma(n)}{n \log(\log n)}}{\displaystyle\frac{\sigma(m)}{m \log(\log m)}} = \left(\frac{\sigma(n)m}{\sigma(m)n}\right) \cdot \left(\frac{\log(\log m)}{\log(\log n)}\right)$$

\noindent Step 1: Analyze the Asymptotic Growth Rate:
By Theorem \ref{6.15}, the quotient $Q = n/m$ is either a prime $p$ or a semi-prime $pq$. It is sufficient to consider the case with the slowest rate of decay, which occurs when $Q$ is a new prime $p$.

$\bullet${  Abundancy Index Ratio:} By Lemma \ref{lem:single_prime}, for a single new prime $p$, $\displaystyle\frac{\sigma(n)m}{\sigma(m)n} = 1 + \displaystyle\frac{1}{p}$. 

\noindent Since by Theorem \ref{Alaoglu Erdos 7}, the largest prime factor $p$ is asymptotically equivalent to $\log n$:
    $$\frac{\sigma(n)m}{\sigma(m)n} = 1 + \frac{1}{\log n}(1+o(1)) = 1 + O\left(\frac{1}{\log n}\right)$$
The overall rate of increase from $G(m)$ to $G(n)$ is bounded by the slowest decaying term. We must, therefore, confirm that the single new prime case yields the slowest rate of decay among all transitions defined in Theorem \ref{6.15}.
The ratio $\displaystyle\frac{\sigma(n)m}{\sigma(m)n}$ determines the magnitude of the increase.

        $\bullet${  Existing Primes:} When $Q=p$ is an existing prime, the ratio is $1+\displaystyle\frac{1}{p\sigma(p^{a_p})}$ (Lemma \ref{lem:single_prime}). Since $\sigma(p^{a_p}) \ge 1+p$ and $p \sim \log n$, the decay is bounded above by $O\left(\displaystyle\frac{1}{(\log n)^2}\right)$. This decay is \text{strictly faster} than the $O\left(\displaystyle\frac{1}{\log n}\right)$ term, a fact confirmed by the limit established in Lemma \ref{lem:sigma_decay}.
        
        $\bullet${  Maximal Growth Rate:} To confirm the single new prime case sets the maximal bound, we compare this rate to the \text{two new primes case} ($Q=pq$, where both $p$ and $q$ are new factors). The transition ratio is:
$$ \frac{G(n)}{G(m)} = \left(1+\frac{1}{p}\right)\left(1+\frac{1}{q}\right)\cdot\frac{\log(\log m)}{\log(\log n)} $$
Since $p \sim \log n$ and $q \sim \log n$, the growth factor is $1 + \displaystyle\frac{1}{p} + \displaystyle\frac{1}{q} + \displaystyle\frac{1}{pq} = 1 + O\left(\displaystyle\frac{1}{\log n}\right)$. This demonstrates that the two new primes case does not yield a growth rate asymptotically slower than the single new prime case, confirming $O(1/\log n)$ as the tightest upper bound.

        $\bullet${  Semi-Primes:} When $Q=pq$, the ratio is a multiplicative product of two terms (Lemma 5). Since the maximum rate of increase is set by the single new prime case, all four semi-prime sub-cases are bounded by the same maximum rate.
        
    $\bullet${  Double Logarithm Ratio:} By Lemma \ref{lem:loglog_equivalence}
    $$\frac{\log(\log m)}{\log(\log n)} = 1+o(1)$$
Combining these, the overall rate of increase from $G(m)$ to $G(n)$ is \text{bounded} by the maximum magnitude term, which is the slowest decaying term:
$$ \frac{G(n)}{G(m)} = \left(1+O\left(\frac{1}{\log n}\right)\right)(1+o(1)) = 1+O\left(\frac{1}{\log n}\right) $$

\vskip 12pt
\noindent Step 2: Derive the Contradiction.
We have assumed, for contradiction, that $n$ (the least CA counterexample) is also part of the infinite set defined by Theorem \ref{Robin}.

\vskip 6pt
$\bullet$ Setting the Necessary Asymptotic Inequality:
Since $m$ satisfies Robin's Inequality, $G(m) < e^\gamma$. Using the result from Step 1, the derived upper bound for $G(n)$ is:
$$G(n) < G(m)  \left( 1 + O\left(\frac{1}{\log n}\right) \right) < e^\gamma  \left( 1 + O\left(\frac{1}{\log n}\right) \right)$$
By the definition of Big-O notation, there exists a fixed positive constant $C_1$ such that for all sufficiently large $n$, the upper bound can be written as:
$$G(n) < e^\gamma \left( 1 + \frac{C_1}{\log n} \right) \quad \text{(Upper Bound)}$$
If $n$ is part of the infinite set (Theorem \ref{Robin}), it must satisfy the strong lower bound:
$$G(n) > e^\gamma \left( 1 + \frac{c}{(\log n)^b} \right) \quad \text{(Lower Bound)}$$
where $c > 0$ is a fixed constant and $0 < b < 1/2$. For $n$ to satisfy both the Upper and Lower bounds simultaneously, the Lower Bound term must be strictly less than the Upper Bound term for sufficiently large $n$, which requires:
\begin{align}
\label{2.1}
\frac{c}{(\log n)^b} < \frac{C_1}{\log n}
\end{align}

\vskip 6pt
$\bullet$ Asymptotic Test of the Necessary Condition:
The inequality (\ref{2.1}) requires the term on the left to decay at a rate at least as fast as the term on the right. We test this necessary asymptotic relationship by analyzing the limit of their ratio as $n \rightarrow \infty$:
$$\lim_{n\rightarrow\infty} \frac{\displaystyle\frac{c}{(\log n)^b}}{\displaystyle\frac{C_1}{\log n}} = \lim_{n\rightarrow\infty} \frac{c}{C_1}  \frac{\log n}{(\log n)^b} = \lim_{n\rightarrow\infty} \frac{c}{C_1}  (\log n)^{1-b}$$
Since the assumption is $0 < b < 1/2$, the exponent $(1 - b)$ is positive (i.e., $1 - b > 1/2$). Because $c$ and $C_1$ are both fixed positive constants, the limit is:
$$\lim_{n\rightarrow\infty} \frac{c}{C_1}  (\log n)^{1-b} = \infty$$
The limit tending to $\infty$ demonstrates that the strong lower bound term $\displaystyle\frac{c}{(\log n)^b}$ is asymptotically larger than the derived upper bound term $\displaystyle\frac{C_1}{\log n}$ for all choices of the constant $C_1$. This violates the necessary condition \ref{2.1} for sufficiently large $n$. Thus, the initial assumption that the least CA counterexample $n$ is part of the infinite set defined by Theorem \ref{Robin} leads to a contradiction.

The least CA counterexample $n$ must therefore be constrained to the band:
$$e^{\gamma} < G(n) < e^{\gamma} \left(1 +\frac{c}{(\log n)^b}\right)$$
where $0 < b < 1/2$.
\end{proof}
\newpage
\bibliographystyle{elsarticle-num}

\end{document}